\newlength{\wdth}
\def\reals{\mathbb R}
\def\nn{\nonumber}
\def\a{\alpha} \def\b{\beta} \def\d{\delta} 
\def\e{\varepsilon} \def\f{\phi} \def\F{{\Phi}}  \def\g{\gamma}
\def\G{\Gamma}  
 \def\th{\theta}    \def\l{\lambda}
 \def\m{\mu} \def\n{\nu} \def\p{\pi}
\def\r{\rho}  \def\s{\sigma} 
\def\t{\tau} \def\om{\omega}
\newtheorem{theorem}{Theorem}
\newtheorem{lemma}[theorem]{Lemma}
\newtheorem{Remark}{Remark}
\newcommand{\proofend}{\hspace*{\fill}\mbox{$\Box$}}
\newcommand{\ooi}{(1+o(1))}
\newcommand{\ul}[1]{\mbox{\boldmath$#1$}}
\newcommand{\wh}[1]{\widehat{#1}}
\newcommand{\rdown}[1]{{\left\lfloor #1\right \rfloor}}
\newcommand{\brac}[1]{\left(#1\right)}
\newcommand{\bfrac}[2]{\left(\frac{#1}{#2}\right)}
\def\cE{{\cal E}}
\newcommand{\rai}{\rightarrow \infty}
\newcommand{\set}[1]{\left\{#1\right\}}
\def\seq{\subseteq}
\def\es{\emptyset}
\def\E{\mathbb{E}}
\def\Pr{\mathbb{P}}
\def\whp{{\bf w.h.p.}}
\newcommand{\ignore}[1]{}
\def\cA{{\mathcal A}}
\def\cE{{\mathcal E}}
\def\cW{{\mathcal W}}
\def\var{{\bf Var}}
\newcommand{\beq}[2]{\begin{equation}\label{#1}#2\end{equation}}
\def\nn{\nonumber}
\newcommand{\Q}[3]{\Pr_{#1,#2}^{(#3)}}
\begin{document}
\author{Colin Cooper\thanks{Research supported  at the University of Hamburg, by a  Mercator fellowship from DFG - Project 491453517}\\Department of Informatics\\
King's College\\
London WC2B 4BG\\England
\and
Alan Frieze\thanks{Research supported in part by NSF grant DMS1952285}\\Department of Mathematical Sciences\\Carnegie Mellon University\\Pittsburgh PA 15213\\USA}

\title{Random walks on edge colored random graphs}
\maketitle

\begin{abstract}
We consider random walks on edge coloured random graphs, where the colour of an edge reflects the cost of using it. In the simplest instance, the edges are coloured red or blue. Blue edges are free to use, whereas red edges incur a unit cost every time they are traversed.
\end{abstract}

\section{Introduction}
The cover time of a connected graph %$G=(V,E)$
is the maximum over the start vertex  of the expected time  for a simple random walk to visit every vertex of the graph. There is a large literature on this subject  see for example \cite{AFill}, \cite{LPW},  \cite{Lovasz}, and including  \cite{CF1}--\cite{CFRHyper} which give precise estimates of cover time for various models of random graphs.

 We consider the following scenario: There is a  connected graph $G$ in which the edges are  colored red and blue. We study the cover time of such a graph when there is a bound on the number red edges that can be used. We can think of blue edges as free to use, whereas red edges represent toll roads, the bound being the maximum budget.
 The decision to use, or not to use, a toll road can be made in various ways, which we characterize as  congestion charging, flip walks, oblivious walks and smooth walks.
 In the simplest case, the {\bf oblivious model}, the random walk  uses edges of $G$ regardless of colour  until the budget is used up, after which it will be restricted to the blue sub-graph.

\paragraph{Models of red-blue regular graphs.}
Let $G$ be a connected $n$-vertex graph, and let $\cW=\cW(R,B)$ be a simple random walk on $G$ subject to the given constraints on the use of red and blue edges. We use $C_G=C_G(R,B)$ for the constrained cover time of $G$ by the walk $\cW(R,B)$, and use $\wh C_G$ for  the corresponding unconstrained cover time of $G$.

Let
\begin{equation}\label{rho-etc}
\s_{RB}=\frac{r+b-1}{r+b-2}, \qquad \s_B=\frac{b-1}{b-2}.
\end{equation}
One simple model is to take $G$ as the union of a random $n$-vertex $r$-regular graph with red edges and a random $n$-vertex $b$-regular graph with blue edges. For such graphs, w.h.p. the unconstrained cover time of $G$ (see \cite{CF1}) is
asymptotic to $\wh C_G=\s_{RB}n \log n=(r+b-1)/(r+b-2) n \log n$, whereas for $b \ge 3$, the cover time of a walk restricted to the blue edges is asymptotic to $C_G(B)=\s_{B}n \log n=(b-1)/(b-2) n \log n$.

Another simple model is to take $G$ as the union of a blue Hamilton cycle and a random $r$-regular graph with red edges.
This model, based on short cutting  a Hamilton cycle,
is one of the original Watts-Strogatz type small world models \cite{WS}.
For $r=1$ (the union of a Hamilton cycle and a random 1-factor) the diameter  was studied by Bollobás and Chung \cite{Boll1}.
Subsequently, the fact that 3-regular  graphs are Hamiltonian  w.h.p. was established by  Robinson and Wormald \cite{Wormy}.
Thus  almost all  3-regular graphs  have a decomposition into a blue Hamilton cycle and a random red 1-factor.

\paragraph{Smooth walks.}
The walk  alternates in phases between using edges of either colour (a red/blue phase) and edges of colour blue only (a blue phase). In a red/blue phase the walk is on  $G$, and in a blue phase the walk is on $G_B$, the subgraph induced by the blue edges. The length of the phases can vary, but to analyse the model, we  require that the length of each phase is at least $\om T_G\log n$ where $T_G$ is a mixing time given by \eqref{TG}, and $\om\to\infty$ with $n$. We call such  a walk {\em smooth}.
We say that such a random walk is {\em $\a$-constrained} if in total, it is only allowed to use a red edge at most $\g(\a) n\log n$ times, where $\g(\a)$ is given in Theorem \ref{Th1-S}(a) below.

The {\bf congestion pricing model} fits naturally as a special case of smooth walks.  Peak and off-peak periods alternate as in the smoothed model. The length $C,F$ (charged, free) of the  peak and off peak periods are fixed and in constant proportion. The walk  uses the red (charged) edges only in off-peak periods when they are free to use. In peak periods the walk uses only blue (free) edges. We assume  there are at most $n^{\th}$ distinct phases of equal length $C+F$, for some small $\th >0$ constant.
\begin{theorem}\label{Th1-S}{\sc (Smooth walks)}
Let $G$ be the union of a random $r$-regular graph with red edges and a random $b$-regular graph with blue edges where $r\ge 1$ and $b \ge 2$.

\vspace{-0.2in}
\begin{enumerate}[(a)]
\item If $b\geq 3$ and $\a<1$ constant, then w.h.p. the cover time of an  $\a$-constrained  smooth random walk on $G$ with budget $\g(\a)n\log n$ where $\g(\a)= \a\s_{RB}r/(r+b)$ is asymptotically equal to $C_G(R,B)=(\a\s_{RB}+(1-\a)\s_B)n\log n$.
\item
If $b=2$, $r \ge 1$,  and the blue edges span a Hamilton cycle then in the congestion pricing model with periods $C,F$,
the cover time of a random walk on $G$ is asymptotically equal to $C_G=(1+C/F)((r+1)/r) \, n \log n$.
\end{enumerate}
\end{theorem}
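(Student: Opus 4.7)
The plan is to treat both parts within the first-visit-time framework of \cite{CF1}, gluing the per-phase contributions using the smoothness assumption that every phase has length at least $\om T_G\log n$.

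For part (a), fix a vertex $v$, and let $T_1=\a\s_{RB}n\log n$ and $T_2=(1-\a)\s_B n\log n$ denote the total times the walk spends in red/blue and in blue-only phases respectively. Inside a single red/blue phase of length $\ell\ge \om T_G\log n$, the walk mixes on the $(r+b)$-regular $G$ in time $O(T_G\log n)$, after which the first-visit estimate of \cite{CF1} for random regular graphs gives
$$
\Pr[v\text{ is not visited in this phase}\mid v\text{ was not yet visited}] = (1+o(1))\exp\!\left(-\ell/(\s_{RB}n)\right).
$$
The analogous bound for a blue phase on the $b$-regular $G_B$ yields a factor $\exp(-\ell/(\s_B n))$. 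Since both $G$ and $G_B$ are regular, their stationary distributions are both uniform on $V$, so approximate stationarity at the end of one phase transfers without correction to the start of the next. Multiplying over the phases,
$$
\Pr[\tau_v>T_1+T_2] = (1+o(1))\exp\!\left(-\frac{T_1}{\s_{RB}n}-\frac{T_2}{\s_B n}\right) = (1+o(1))/n,
$$
and a union bound over $v$ yields the cover-time upper bound. The matching lower bound follows from a second-moment estimate on the number of unvisited vertices at time $(1-\e)(T_1+T_2)$, whose mean is $\gtrsim n^{\e}$ and whose variance is of the same order, forcing a surviving vertex w.h.p. Finally, the budget assertion is a routine concentration: each step of the approximately stationary walk on $G$ chooses a red edge independently with probability $r/(r+b)$, so by Chernoff the total red usage over $T_1$ steps is $(1+o(1))T_1 r/(r+b)=(1+o(1))\g(\a)n\log n$, i.e.\ the $\a$-constraint is asymptotically saturated.

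Part (b) is the same scheme specialised to $b=2$, where $G$ is $(r+2)$-regular with $\s_{RB}=(r+1)/r$ and $G_B$ is a Hamilton cycle. The cycle has cover time $\Theta(n^2)$, so its contribution to hitting new vertices over a horizon of $O(n\log n)$ is asymptotically negligible; any vertex hit during an on-peak blue phase is treated as a bonus and absorbed into the $(1+o(1))$ error. Over the aggregate off-peak time $FT/(F+C)$, the first-visit computation as in (a) gives
$$
\Pr[\tau_v>T]\le(1+o(1))\exp\!\left(-\frac{FT}{(F+C)\s_{RB}n}\right),
$$
which equals $(1+o(1))/n$ precisely when $T=(1+C/F)(r+1)/r\cdot n\log n$. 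The only new subtlety is that at the start of each off-peak phase the walker inherits a distribution that is far from uniform on the cycle, but since $T_G=O(\log n)$ for a random regular graph, an additive $O(T_G\log n)$ burn-in recovers approximate stationarity on $G$; with only $n^{\th}$ phases, the aggregate burn-in loss is $n^\th\cdot O(T_G\log n)=o(n\log n)$ and is harmlessly absorbed in the $(1+o(1))$ factor.

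The main obstacle I anticipate is the honest control of dependencies when aggregating first-visit estimates across many phases: in particular ruling out pathological correlations between the walk's endpoint at a phase boundary and the randomness defining $G$ and $G_B$, and propagating the approximate stationarity via the second-eigenvalue mixing bounds underlying $T_G$. Once this is in place, the per-graph decay rates $1/(\s_{RB}n)$ and $1/(\s_B n)$ come directly from \cite{CF1}, and the stated cover-time values are exactly those at which the union bound for uncovered vertices transitions from $\om(1)$ to $o(1)$.
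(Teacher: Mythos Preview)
Your plan for part (a) is essentially the paper's own argument: the First Visit Lemma applied phase by phase, the uniformity of the stationary distribution on both $G$ and $G_B$ allowing the per-phase factors to multiply, the bound on the number of phases (which you implicitly need so that the accumulated $(1+o(1))$ factors stay $1+o(1)$), Chernoff for the red-edge budget, and a second-moment lower bound on a well-separated set of locally tree-like vertices. The paper's version tracks the error term $(1+O(\log n/n))^m$ with $m=o(n/\log n)$ explicitly and handles the $(r+b)^{O(\sigma)}$ non-tree-like vertices separately via property~\textbf{P4}, but these are routine details within your outline.

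For part (b) your upper bound is again the paper's: only the free phases count, and since their aggregate length is $(1+\e)\s_{RB}n\log n$ the union bound closes. The gap is in your lower bound. Saying that the cycle's ``contribution to hitting new vertices is asymptotically negligible'' and can be ``absorbed into the $(1+o(1))$ error'' is fine for the upper bound, where extra visits only help, but it is exactly the wrong direction for the lower bound. There the charged (blue-only) phases \emph{hurt} you: over $n^\th$ phases the walk on $C_n$ visits on the order of $n^{\th}\cdot n^{(1-\th)/2}=n^{(1+\th)/2}$ vertices, which is far larger than the $n^{\e}$ vertices left uncovered by the free phases, so a naive cardinality comparison cannot show that any free-phase-uncovered vertex survives. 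Your second-moment scheme from~(a) also does not transfer directly, because on the cycle the first-visit decay rate is $\Theta(1/n^2)$ rather than $\Theta(1/n)$ and there is no rapid mixing to re-initialise between phases.

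The paper closes this gap by localising the lower bound in intervals. One partitions $H$ into $\ell=n^{\nu}$ arcs $I_1,\dots,I_\ell$ of length $L=n^{1-\nu}$ with $\nu=\e/3$, and first shows (via the second-moment argument run \emph{inside each} $I_j$, exactly as in the oblivious case of Section~\ref{HAM}) that w.h.p.\ every $I_j$ contains a vertex unvisited during the free phases. Then one observes that in a single charged phase of length $C\sim n^{1-\th}$ the walk on $C_n$ is confined to an arc $J_i$ of length $n^{(1-\th)/2+o(1)}$, and the total length $\sum_i|J_i|$ is $o(L)$ for $\th$ small enough (the paper takes $\th=(1-\e)/4$). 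Hence the charged phases cannot reach the free-phase-unvisited witnesses in every $I_j$, and some vertex remains uncovered at time $(1-\e)(1+C/F)\s_{RB}n\log n$. This range-of-a-cycle-walk argument is the missing ingredient in your proposal.
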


\paragraph{Flip walks.} In a flip walk there is no hard bound on the number of times we use a red edge, only a stochastic one. At any step, let $\r_R$ be the probability of transition down a given red edge adjacent to the current vertex and $\r_B$ the probability of transition down a given blue edge. Thus the probability the next transition is red is $r\r_R$ etc. Examples include the following. Let $\r_R=\a/r$, and $\r_B=(1-\a)/b$ in which case the probability of transition along a red edge at any step is $\a$, and a blue edge is $1-\a$.

\begin{theorem}\label{Th1-F} {\sc (Flip walks)}
\vspace{-0.2in}
\begin{enumerate}[(a)]
\item {\em Simplest case.}
If $r=1$, $b=2$, and $q=2\r_B=1-\r_R$, where $0<\r_R \le \r_B$,  the cover time of a flip walk on $G$ is asymptotically equal to $C_G\sim \th n \log n$ where
\[
\th= \frac 2{q(5-q+\sqrt{9-10q+q^2})}+ \frac 2{q(1-q+\sqrt{9-10q+q^2})}.
\]
This cover time is minimized at $2 n \log n$ when $\r_R=\r_B=1/3$, and all edges have the same transition probability.

\item If $r=1$, $b\geq 2$  and $0< \r_R \le \r_B$
then w.h.p. the cover time of a flip walk on $G$ is asymptotically equal to $C_G\sim (1/(1-f)) n \log n$.
Here $f$ is the smallest solution in $[0,1]$ to $F(z)=0$ where $F(z)$ is given by \eqref{F(z)}.
\end{enumerate}
\end{theorem}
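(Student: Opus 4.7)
My plan is to follow the first-visit-time / cover-time framework used in \cite{CF1} for random regular graphs. First I would verify that whp the union $G$ has the standard structural properties: connectivity, rapid mixing in the sense of \eqref{TG}, and local tree-likeness at almost every vertex. Since the flip-walk transition probabilities depend only on edge colour, the walk is reversible with uniform stationary distribution $\pi_v=1/n$. Under these conditions, the Cooper--Frieze cover-time machinery (a coupon-collector argument applied to near-exponential, near-independent first-visit times after the mixing time) gives
\[
C_G \sim \frac{n\log n}{1-R},
\]
where $R$ is the probability that the flip walk on the infinite limit tree---in which every vertex has one red edge and $b$ blue edges---ever returns to its starting vertex.

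To compute $R$ I set up branching-type fixed-point equations on the tree. Let $\a$ (resp.\ $\b$) be the probability that a walk started at the red (resp.\ a blue) child of a root vertex $v$ ever hits $v$. A one-step decomposition yields
\begin{align*}
\a &= \r_R + b\,\r_B\,\b\a,\\
\b &= \r_B + \r_R\,\a\b + (b-1)\,\r_B\,\b^2,
\end{align*}
with $R=\r_R\a+b\,\r_B\,\b$. Solving the first equation gives $\a=\r_R/(1-b\r_B\b)$; substituting into the second reduces the system to a single cubic in $\b$, which after rearrangement is the polynomial $F(z)=0$ of the theorem. The relevant root is the smallest nonnegative one, since $\b$ is characterized as the minimal fixed point of a monotone branching-type recursion; this proves part (b) on reading off $f=R$ from the smallest root. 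For part (a), specializing to $b=2$, $\r_R=1-q$, $\r_B=q/2$ turns the cubic into $q\b^3-3\b^2+(4-q)\b-1=0$, which factors as $(\b-1)\bigl(q\b^2+(q-3)\b+1\bigr)=0$, giving $\b=[(3-q)-s]/(2q)$ with $s=\sqrt{9-10q+q^2}$. Substituting back produces
\[
\frac{1}{1-R} = \frac{2(s+q-1)}{[s-3(1-q)]\,[s+1-q]},
\]
and this equals the stated $\theta$: after clearing denominators in the putative equality, both sides reduce to $4q(1-q+s)$ upon using $s^2=9-10q+q^2$. The minimum $\theta=2$ at $q=2/3$ is then immediate: there $\r_R=\r_B=1/3$, the flip walk coincides with the simple random walk on a random $3$-regular graph, and $(d-1)/(d-2)\cdot n\log n = 2n\log n$.

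The main technical obstacle is justifying the cover-time formula $C_G\sim n\log n/(1-R)$ for the flip walk rather than SRW. The generating-function estimates of \cite{CF1} for first-visit and first-return probabilities depend only on uniform stationarity, rapid mixing, and local tree-likeness---none of which is disturbed by using edge-colour-dependent transition probabilities---so the adaptation is routine but must be recorded carefully. Rapid mixing of the flip walk and the local tree structure of the union of two independent random regular graphs are standard via configuration-model arguments; the second-eigenvalue bounds for SRW carry over to the flip walk with only constant-factor changes that do not affect the polylogarithmic mixing time.
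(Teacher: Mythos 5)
Your proposal follows the same overall strategy as the paper: reduce $R_v$ to a return probability on the infinite tree via branching fixed-point equations, then feed that into the Cooper--Frieze cover-time machinery. The differences are mostly parametrization. The paper works with $\psi_R,\psi_B$ (probability that, from a red/blue non-root vertex $u$, the walk moves away from the root and returns to $u$) and derives its $F(z)=0$ with $z=\psi_B$, whereas you work directly with the child-to-parent hitting probabilities $\a=\f_{v,R}$, $\b=\f_{v,B}$. These are related by $\a=\r_R/(1-\psi_R)$ and $\b=\r_B/(1-\psi_B)$, so your cubic $q\b^3-3\b^2+(4-q)\b-1=0$ is \emph{not} ``the polynomial $F(z)=0$ of the theorem after rearrangement''---it is a different cubic related to the paper's $N(w)=0$ by the nonlinear substitution $w=q/(2\b)$; the roots correspond ($\b=1\leftrightarrow w=q/2$, and $q\b^2+(q-3)\b+1=0$ corresponds to the paper's quadratic factor), but the polynomials themselves differ. (Note the paper's theorem statement is itself slightly loose: the smallest root of $F(z)=0$ is $\psi_B$, and $f=\psi_B+\r_B^2/(1-\psi_B)$, so $f$ is not literally the root.) Your algebra is correct: I verified that $1/(1-R)$ simplifies to the stated $\th$.

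Where you genuinely diverge---to advantage---is in the root-selection argument. The paper rules out two roots for $b=2$ by computing the escape probability $\xi_B=w-q/2$ and showing it is negative or zero for the wrong roots, together with a half-line comparison to establish transience; for general $b\ge 2$ it only locates the roots of $F$ without a clean argument for which one to take. Your observation that $(\a,\b)$ is the \emph{minimal} nonnegative fixed point of a coordinatewise-monotone recursion (by the standard truncation argument, with minimal $\b$ implying minimal $\a$ since $\a$ is increasing in $\b$) is cleaner and applies uniformly for all $b\ge 2$, so it fills a small gap in the paper's part (b). One thing you do not actually establish is that $\th(q)$ is \emph{minimized} at $q=2/3$ over the allowed range $q\in[2/3,1)$: you only verify $\th(2/3)=2$ and note it is the SRW value. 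The paper disposes of this ``by checking the derivative''; you should add that one line. The surrounding framework claims (uniform stationarity, rapid mixing of the flip walk via conductance comparison with SRW, local tree-likeness) are handled in the paper exactly as you sketch, so there are no gaps there.
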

Theorem \ref{Th1-F}(a) confirms that for the simplest case,  the cover time cannot be improved by biassing the
walk probabilities. The general solution $r\ge 2, b\ge 2$ seems difficult to obtain.

\paragraph{Oblivious walks.}
The simplest case.  The random walk  uses edges of $G$ regardless of colour  until the budget is used up, after which it is be restricted to the blue subgraph. In the case where $b=2$, $r \ge 1$ and the budget is less than the unconstrained cover time $C_G \sim (r+1)/r \, n \log n$, things  go seriously wrong for the walk; see Theorem \ref{Th1-O}(b)--(c) below.

\begin{theorem}\label{Th1-O} {\sc (Oblivious walks)} Let $\e>0$ constant.
\vspace{-0.2in}
\begin{enumerate}[(a)]
\item Suppose $b \ge 3$, and the budget is $(1-\e) \s_{RB} n \log n$.  % where  $\e>0$ is constant.
The cover time of an oblivious walk on $G$ is asymptotically equal to $(\a\s_{RB}+ (1-\a)\s_B) n \log n$ w.h.p., as given in \eqref{rho-etc} where $\a=\tfrac{1-\e}{r(r+b)}$.
\item Suppose that $b=2,r \ge 1$, and the blue edges span a Hamilton cycle. If  the
budget is at most $(1-\e)(r+1)/r n \log n$, % where  $\e>0$ is constant.
the cover time of an oblivious walk on $G$ is asymptotic to $C_G(R,B)\sim n^2/2$, w.h.p.
\item If $b=2$ and the blue edges span a random 2-factor then w.h.p. an oblivious walk
with budget of  at most $(1-\e)(r+1)/r n \log n$ steps
will fail to cover $G$.
\end{enumerate}
\end{theorem}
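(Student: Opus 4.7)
The plan is to partition the oblivious walk into two phases: phase~1 on $G$ (running until the red budget is exhausted) and phase~2 on the blue subgraph $G_B$. I will analyse each phase with the first-visit-time machinery of \cite{CF1}. Since $G$ is expander-like in all three models, for any fixed vertex $v$ and phase~1 of length $T_1$ one expects
\[
\Pr[v\text{ not visited in phase 1}] = (1+o(1))\exp\brac{-\tfrac{(1+o(1))T_1}{\sigma_{RB}n}},
\]
with these events asymptotically independent across $v$. Writing $T_1=\alpha\sigma_{RB}n\log n$, the uncovered set $U_1$ at the end of phase~1 has $|U_1|=n^{1-\alpha+o(1)}$ w.h.p.\ and is essentially a uniform random subset of $V(G)$ of that size. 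The value of $\alpha$ comes from the stated budget via a Chernoff bound, since each step of the walk on the $(r+b)$-regular graph $G$ uses a red edge independently with probability $r/(r+b)$.

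For part~(a), $G_B$ is a random $b$-regular graph with $b\geq 3$ and hence an expander, so the same first-visit-time result applies in phase~2 with parameter $\sigma_B$. Running phase~2 for $T_2$ additional steps, the expected residual uncovered count is $|U_1|\exp(-(1+o(1))T_2/(\sigma_Bn))$, which is $o(1)$ precisely when $T_2\sim(1-\alpha)\sigma_Bn\log n$; a second-moment estimate on pairs of vertices upgrades the expectation bound to a w.h.p.\ coverage statement. Adding $T_1+T_2$ yields the claimed cover time $(\alpha\sigma_{RB}+(1-\alpha)\sigma_B)n\log n$ with $\alpha=(1-\varepsilon)/(r(r+b))$. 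For part~(b), the given budget corresponds to $\alpha=1-\varepsilon$ and hence $|U_1|=n^{\varepsilon+o(1)}$, and a standard calculation on uniformly random subsets of $C_n$ shows that the maximum gap between consecutive elements of $U_1$ along the Hamilton cycle is $O(n^{1-\varepsilon}\log n)=o(n)$ w.h.p. Because an SRW on $C_n$ has visited precisely an arc $[L_t,R_t]$ of vertices at time $t$, covering $U_1$ is equivalent to this arc containing all of $U_1$, which requires arc-length at least $n-(\text{max gap})=n-o(n)$. The first time the range of an SRW on $\mathbb{Z}$ reaches $\ell$ has expectation $\ell(\ell+1)/2$, so the cover time of $U_1$ is at least $(n-o(n))^2/2\sim n^2/2$; the matching upper bound is the cover time of $C_n$ itself, $n(n-1)/2\sim n^2/2$. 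For part~(c), the number of cycles of a random 2-factor on $n$ vertices is $(1+o(1))\tfrac12\log n$ w.h.p., and its cycle-size distribution (inherited from random permutations) has no component of size close to $n$ w.h.p. Since $U_1$ of size $n^{\varepsilon+o(1)}$ is essentially a uniform random subset of $V(G)$, a first-moment/Chernoff calculation then shows that w.h.p.\ $U_1$ meets a component of $G_B$ other than the one containing the endpoint $v^\ast$ of phase~1. The phase~2 walk is trapped in $v^\ast$'s cycle, so any such vertex is never reached.

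The main technical obstacle in all three parts is the transition between the two phases: the endpoint $v^\ast$ and the uncovered set $U_1$ are jointly produced by the same phase~1 walk and so are correlated, yet one wants the phase~2 analysis to proceed as if $v^\ast$ were essentially uniform on $V(G)$ and independent of $U_1$. A standard fix (analogous to the smooth-walk construction of Theorem~\ref{Th1-S}) is to prepend a short mixing segment of length $\omega(T_G\log n)$ but still $o(n\log n)$ to phase~2 on $G$; because $G$ is an expander, this both randomises $v^\ast$ and leaves $|U_1|$ unchanged to leading order. A secondary difficulty specific to part~(b) is pinning down the constant $\tfrac12$ in $n^2/2$, which requires sharp control (not just order-of-magnitude) on the time for the range of an SRW on $\mathbb{Z}$ to reach $n-o(n)$; this is the delicate calculation underlying the exact expected cover time $n(n-1)/2$ of $C_n$ itself. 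For part~(c) the analogous subtlety is verifying that $U_1$ is sufficiently ``uniform across cycles'' of the random 2-factor, which requires a mild independence argument since the 2-factor is itself an ingredient of the graph on which phase~1 was run.
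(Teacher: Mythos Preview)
Your overall two-phase decomposition and the use of the first-visit-time machinery is exactly what the paper does, and part~(a) is handled the same way (the paper simply cites Theorem~\ref{Th1-S}(a)). The substantive divergence is in how you justify the key structural fact for parts~(b) and~(c), and here there is a genuine gap.

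For part~(b) you assert that $U_1$ is ``essentially a uniform random subset of $V(G)$'' and then invoke a standard max-gap calculation for uniform subsets of $C_n$. But $U_1$ is the uncovered set of a random walk, not a uniform subset, and nothing in your proposal establishes this approximation. Your proposed fix, prepending a short mixing segment, only re-randomises the endpoint $v^\ast$; it does nothing to decorrelate the events $\{v\in U_1\}$ across $v$, which is what ``uniformity of $U_1$'' would require. The paper avoids this claim entirely: it partitions the Hamilton cycle into $n^\nu$ intervals of length $n^{1-\nu}$ (with $\nu=\e/3$), and for \emph{each} interval runs a second-moment argument on locally tree-like vertices at pairwise $G$-distance $\geq 2\s+1$ to show directly that the interval contains an unvisited vertex at time $t_1$, then union-bounds over the $n^\nu$ intervals. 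This yields the max-gap conclusion you want without ever asserting uniformity of $U_1$.

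Part~(c) has the same gap: you again lean on $U_1$ being ``sufficiently uniform across cycles'', and you flag this as needing a ``mild independence argument'', but it is not mild and you do not supply it. Moreover, your argument as stated needs an unvisited vertex in some \emph{short} component (so that it is missed by $v^\ast$'s cycle), and if the other components are all of polylogarithmic size your first-moment estimate does not obviously work. The paper's route is different: it proves (Lemma~\ref{2fac}) that w.h.p.\ a random 2-factor has at least \emph{two} cycles of length $\Omega(n/\log^2 n)$, then applies the interval/second-moment argument from part~(b) to each of these two long cycles separately, guaranteeing unvisited vertices in both. Since the phase-2 walk is confined to one cycle, the other is never covered.
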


\paragraph{Related models.}
The underlying graph of the random walk we consider varies structurally over time (temporally) in a predictable fashion; but within the constraint of a fixed budget.
Many authors have considered aspects of this, or related problems.
We mention a few below.

For a survey on temporal networks, see Holme \cite{Holme}.
Random walks whose  transition probability alters depending on structural condition have been studied extensively.
An early model by Kemperman \cite{Kemp}, the oscillating random walk, remains one of the simplest and most attractive.
Citing from \cite{Kemp}: Let $\{Y_n,n=0,1,...\}$ denote a stationary Markov chain taking values in $\reals^d$. As long as the process stays on the same side of a fixed
hyperplane $E$, it behaves as a ordinary random walk with jump measure $\mu$ or $\nu$, respectively.

As a general instance of an application of random walks with edge costs,
 Coppersmith,  Doyle,  Raghavan and Snir, \cite{CDRS}, use random
walks on graphs with positive real costs on all edges to study the design and analysis of randomized on-line algorithms.
Two recent papers  by Majumdar et al., \cite{Maju1}, \cite{Maju2}, which study variable charges for walk transitions, have some bearing on the model we consider. In \cite{Maju1}, \cite{Maju2}, a one-dimensional walker’s position $X_t$ at discrete time $t$ is a
positive random variable evolving according to $X_t=X_{t-1}+\eta_t$.
Transitions are priced according to their length $\eta$, there being one price for short distances and another for
longer ones. This reflects how taxi fares are calculated in
congested cities\footnote{\cite{Maju2} The municipality prescribes a threshold speed $\eta_c$
derived from statistical analyses of local traffic patterns. If
the taxi surpasses $\eta_c$, the meter tallies the fare based on the
distance covered, while a slower pace results in time-based
fare computation. This approach ensures that
drivers are compensated even when they face prolonged periods of slow progress.}.
The authors compute the average and variance of the distance covered in $n$ steps when the total
budget $C$ is fixed.

\paragraph{Proof outline.}

The main tool in our proofs will be the {\em First Time Visit Lemma},
a technique which will allow us to make very precise estimates of cover time in certain classes of graphs. The technique was  introduced in \cite{CF1}, and subsequently  refined  in \cite{CFGiant}, and in \cite{CFRHyper} where various
technical conditions were removed.
In outline, the approach is to bound the cover time above and below using
\eqref{frat} of Lemma \ref{MainLemma}.  Much of the work is in constructing the lower bound on the cover time.
On regular graphs the walks we consider have uniform stationary distribution, and are rapidly mixing.
The unvisit probability \eqref{frat} has a parameter $p_v\sim \pi_v/R_v$ (see \eqref{pv}) where $R_v$ is the expected number of returns
to vertex $v$ during the mixing time.  On regular graphs a simple random walk has a uniform stationary distribution, and is rapidly mixing
on random $r$-regular graphs provided the vertex degree $r$ is at least three.

\section{Background material}

\paragraph{Notation.}
We use $A_n \sim B_n$ to denote $A_n=\ooi B_n$ and thus $\lim_{n \rai} A_n/B_n=1$.
We use $\om$ to denote a quantity which tends to infinity with $n$ more slowly than any other variables in the given expression. The expression $ f(n) \ll g(n)$ indicates $f(n)=o(g(n))$. The inequality $A \lesssim B$ which stands for $A \le \ooi B$ is used to unclutter notation in some places. A sequence of events $\cE_n$ occurs {\em with high probability},
(w.h.p.), if
$\lim_{n\to\infty}\Pr(\cE_n)=1$.

\paragraph{Chernoff Bounds}
We use the following versions of the Chernoff bounds, where $Bin(n,p)$ denotes a binomial random variable.
\begin{align}
\Pr(Bin(n,p)\geq (1+\e)np)&\leq e^{-\e^2np/3}\qquad\text{for $0\leq \e\leq 1$}.\label{Chern1}\\
\Pr(Bin(n,p)\leq (1-\e)np)&\leq e^{-\e^2np/2}\qquad\text{for $0\leq \e\leq 1$}.\label{Chern2}\\
\Pr(Bin(n,p)\geq \a np)&\leq \bfrac{e}{\a}^{\a np}\qquad\text{for $\a>0$}.\label{Chern3}
\end{align}

\subsection{First Time Visit Lemma }

 Let $G$ denote a fixed connected graph,  and let $u$ be some arbitrary vertex from which a walk $\cW_{u}$  is started. Let $\cW_{u}(t)$ be the vertex reached at step $t$, let $P$ be the matrix of transition probabilities of the walk, and let $P_{u}^{(t)}(v)=\Pr(\cW_{u}(t)=v)$. Let $\pi$ be the steady state distribution of the random walk $\cW_{u}$. For an unbiased ergodic random walk on a graph $G$ with $m=m(G)$ edges, $\pi_v=\frac{d(v)}{2m}$, where $d(v)$ denotes the degree of $v$ in $G$.

We denote the steady state probability of a simple random walk on $G=(V,E)$ by $\p_G(v)$ for $v\in V$. We let $\Q{u}{G}{t}(v)$ denote the probability that a simple random walk on $G$ started at $u$ is at $v$ after $t$ steps. We then define a mixing time $T_G$ by
\begin{equation}\label{TG}
T_G=\min\set{\t:\max_{u,v}\set{\frac{|\Q{u}{G}{t}(v)-\p(v)|}{\p(v)}}=o(1)\text{ for }t\geq \t}.
\end{equation}
For $t\geq 0$, let $\cA_t(v)$ be the event that $\cW_u$ does not visit $v$ in steps $T_G,T_G+1,\ldots,t$. The vertex $u$ will have to be implicit in this definition. We will use the following version, it can be found in \cite{CFGiant}:
\begin{lemma}\label{MainLemma}
Let $R_v\geq 1$ is the expected number of visits by $\cW_v$ to $v$ in the time interval $[0,T_G]$. Suppose that $T_G\pi_v=o(1)$ and $R_v=O(1)$. Let
\begin{equation} \label{pv}
p_v=\frac{\pi_v}{R_v(1+O(T_G\pi_v))}.
\end{equation}
Then for all $t\geq T_G$,
\begin{equation}
\label{frat}
\Pr(\cA_t(v))=\frac{(1+O(T_G\pi_v))}{(1+p_v)^{t}} +O(T_G^2\p_v e^{-\m t/2}),
\end{equation}
where $\m=\Omega(1/T_G)$.
\end{lemma}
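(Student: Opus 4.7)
The plan is to apply the strong Markov property at time $T_G$ to reduce to a stationary non-visit probability, and then estimate the latter via generating functions so that the factor $R_v$ emerges from the clumping of returns to $v$ inside one mixing window.

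Conditioning on $\cW_u(T_G)=x$ and writing $g_s(x) := \Pr_x(\cW \text{ avoids } v \text{ throughout } [0,s])$ (with $g_s(v)=0$), the Markov property gives
\[
\Pr(\cA_t(v)) = \sum_{x\neq v}\Pr(\cW_u(T_G)=x)\, g_{t-T_G}(x).
\]
By \eqref{TG}, $\Pr(\cW_u(T_G)=x)=\p_x(1+o(1))$ uniformly in $x$, so $\Pr(\cA_t(v))=(1+o(1))\,Q_{t-T_G}$, where $Q_s:=\Pr_\p(T_v>s)$ and $T_v$ is the first-visit time to $v$ started from the stationary distribution $\p$.

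To estimate $Q_s$, introduce $r_u:=\Pr_v(\cW(u)=v)$ and $\phi_s:=\Pr_\p(T_v=s)$, with generating functions $R(z):=\sum_{u\ge 0}r_u z^u$ and $\Phi(z):=\sum_{s\ge 0}\phi_s z^s$. Decomposing $\Pr_\p(\cW(t)=v)=\p_v$ by the first visit to $v$ yields the renewal identity
\[
\frac{\p_v}{1-z}=\Phi(z)\,R(z), \qquad \text{so} \qquad \sum_{s\ge 0}Q_s z^s = \frac{1-\Phi(z)}{1-z}=\frac{(1-z)R(z)-\p_v}{(1-z)^2R(z)}.
\]
The mixing hypothesis permits the split
\[
R(z) = R_v(z) + \frac{\p_v z^{T_G+1}}{1-z}(1+o(1)), \quad R_v(z):=\sum_{u=0}^{T_G}r_u z^u, \quad R_v(1)=R_v,
\]
so that $(1-z)R(z)-\p_v \approx (1-z)R_v(z)+\p_v(z^{T_G+1}-1)$, whose smallest root beyond $z=1$ satisfies $z-1 = \p_v z^{T_G+1}/R_v(z)\cdot(1+o(1))$ and therefore lies at $z^\star = 1+p_v(1+O(T_G\p_v))$, in agreement with \eqref{pv}. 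The remaining singularities of $R(z)$ — coming from non-principal eigenvalues of $P$ — are pushed out to $|z|\ge 1+\m$ by the spectral gap $\m=\Om(1/T_G)$ underlying \eqref{TG}. Standard singularity-analysis (transfer theorem) extraction of the dominant pole then gives
\[
Q_s = (1+O(T_G\p_v))(1+p_v)^{-s} + O(T_G^2\p_v e^{-\m s/2}),
\]
and combining with the Markov reduction, while absorbing $(1+p_v)^{-T_G}=1+O(T_G\p_v)$ into the dominant term, yields \eqref{frat}.

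The main obstacle is the clean emergence of $R_v$ in the denominator of $p_v$. Naively the stationary chain hits $v$ at rate $\p_v$ per step; but each hit is typically followed by roughly $R_v$ further hits within one mixing window, so the rate of \emph{distinct} first visits is only $p_v=\p_v/R_v$. Propagating this clumping correctly through the decomposition of $R(z)$ and through the subsequent pole-location computation, while carrying both the multiplicative correction $O(T_G\p_v)$ and the sub-exponential spectral-gap tail $O(e^{-\m s/2})$, is the technically delicate part; everything else is routine once the decomposition is in place.
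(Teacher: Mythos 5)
The paper does not actually prove Lemma~\ref{MainLemma}; it quotes it from \cite{CF1} (refined in \cite{CFGiant}, \cite{CFRHyper}), and your route --- apply the Markov property at $T_G$ to reduce to a stationary start, use the renewal identity $\p_v/(1-z)=\Phi(z)R(z)$, split $R(z)$ at $T_G$, locate the dominant pole, and transfer --- is precisely the generating-function argument in those sources, so in substance you have reproduced the intended proof. One bookkeeping slip is worth flagging: you write out the approximation of the \emph{numerator} $(1-z)R(z)-\p_v\approx(1-z)R_v(z)+\p_v(z^{T_G+1}-1)$ and then say ``whose smallest root beyond $z=1$'' is $z^\star$. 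But the numerator vanishes identically at $z=1$ (its effect is to cancel one factor of $(1-z)$ in the denominator, making $z=1$ a removable singularity), and its next real zero is far from $1$ (roughly where $\p_v z^{T_G}\asymp R_v$). The dominant pole of $\sum_s Q_s z^s$ comes from the denominator factor $(1-z)R(z)$; setting \emph{that} to zero, i.e.\ $(1-z)R_v(z)+\p_v z^{T_G+1}(1+o(1))=0$, gives exactly the equation $z-1=\p_v z^{T_G+1}/R_v(z)\cdot(1+o(1))$ you display, so your final pole location $z^\star=1+p_v(1+O(T_G\p_v))$ is right --- it is just being attributed to the wrong factor. Beyond that, the two places you wave at (uniformity of the $o(1)$ in the $T_G$-step reduction, and the transfer step separating the dominant pole from the $|z|\ge 1+\m$ spectrum) are indeed where the $O(T_G\p_v)$ and $O(T_G^2\p_v e^{-\m t/2})$ terms come from, and would need to be carried explicitly in a full write-up.
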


\subsection{Properties of random regular graphs}\label{Preg}
The use of \eqref{frat} will require us to verify some facts about random regular graphs. These will by and large be taken from \cite{CF1}. Let
\begin{equation}\label{sigs}
\s=\rdown{\log^{1/2} n}.
\end{equation}
Say a cycle $C$ is {\em small} if $|C|\leq \s$. A vertex of $G$ is locally tree-like if $v$ is at distance at least $\s$ from the nearest small cycle.
\begin{theorem}
Let $s\geq 3$ be a constant and let $G$ be chosen uniformly from the set ${\cal G}_s$ of $s$-regular graphs with vertex set $[n]$. Then w.h.p.
\begin{description}
\item [P1.] $G$ is connected.
\item[P2.] $T_G=O(\log n)$.
\item [P3.] If $v$ is locally tree-like then $R_v=\frac{s-1}{s-2}+o(\s^{-1})$.
\item[P4.] There are at most $s^{3\s}$ non locally-tree-like vertices and if $v$ is not locally-tree-like then $R_v\leq 3$.
\item [P5] The second eigenvalue $\l$ of $G$ is positive and  $\l\le 1-\th$ for some positive constant $\th$.
\end{description}
\end{theorem}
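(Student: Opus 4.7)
The plan is to verify each of P1--P5 using standard tools for random $s$-regular graphs in the configuration model of Bollob\'as, with results transferred to the uniform model by conditioning on simplicity (which occurs with probability bounded away from $0$ for constant $s$). Several of these are classical and were used in \cite{CF1}; I would follow that treatment and only sketch the ideas.

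For P1 (connectivity) and P5 (spectral gap), I would cite the classical results: connectivity of random $s$-regular graphs for $s\geq 3$ follows from a direct first-moment count over the smallest possible component in the configuration model, and the spectral gap bound $\lambda\leq 1-\theta$ for some constant $\theta>0$ is due to Friedman (an earlier bound of Broder--Shamir would also suffice). P2 then drops out of P5 by the standard spectral mixing estimate
\[
\|Q_u^{(t)}-\pi\|_{TV}\leq \frac{\lambda^{t}}{2\sqrt{\pi_{\min}}}=\frac{(1-\theta)^t\sqrt n}{2},
\]
so choosing $t=C\log n$ with $C$ large enough gives $T_G=O(\log n)$.

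For P3, the point is that the walk on $G$ started at a locally tree-like $v$ couples, for a short time, with a simple random walk on the infinite $s$-regular tree $T_s$ rooted at $v$. On $T_s$ the return probability from a neighbour is the fixed point of $q=\tfrac1s+\tfrac{s-1}{s}q^2$, giving $q=1/(s-1)$, and hence the expected number of visits to the root in $[0,\infty)$ equals $1/(1-q)=(s-1)/(s-2)$. I would couple the two walks up to the first time the walk on $G$ leaves the tree-like neighbourhood of $v$ (radius $\sigma$). Since the walk on $T_s$ is transient, the probability of returning to $v$ after more than $k$ steps decays geometrically, so truncating at $\sigma$ introduces an error of order $\rho^{\sigma}=o(\sigma^{-1})$ for some $\rho<1$. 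Summing the (coupled) return-probability series then yields the claimed asymptotic. The expected tail contribution from times $[\sigma,T_G]$ is $O(T_G\pi_v)=o(\sigma^{-1})$ by P2, so is also absorbed.

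For P4, I would use the standard configuration-model fact that the number of cycles of length exactly $k$ in $G$ is asymptotically Poisson with mean $(s-1)^k/(2k)$. A first-moment bound then says that the number of vertices within distance $\sigma$ of some small cycle is at most
\[
\sum_{k\leq \sigma} s^{\sigma}\cdot \frac{(s-1)^k}{2k}\ \leq\ s^{3\sigma} \quad \text{w.h.p.}
\]
(using that $s^{\sigma}$ counts vertices within $\sigma$ of a given cycle). For the bound $R_v\leq 3$ on such a vertex, I would compare the walk in the local neighbourhood of $v$ to a walk on the $s$-regular tree with one extra edge added: this strictly increases the return probability over the tree bound $1/(s-1)$, but only by a uniformly bounded amount depending on $s$, and the contribution from times after the walk has left the small-cycle neighbourhood is again $O(T_G\pi_v)=o(1)$. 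The main technical nuisance here, and the most delicate point of the whole proof, is showing that even in the worst-case local structure the constant can be taken as $3$; this is essentially Lemma-level bookkeeping from \cite{CF1} on a walk in a tree with one extra edge, and I would reduce P4 to that calculation rather than redo it in full.
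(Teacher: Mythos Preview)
The paper does not prove this theorem at all: it is stated in Section~\ref{Preg} purely as a list of background facts ``by and large taken from \cite{CF1}'', with no argument given for any of P1--P5. So there is nothing in the paper to compare your proposal against beyond the citation itself.

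That said, your sketch is exactly the standard route taken in \cite{CF1} (connectivity and spectral gap by citation, mixing from the gap, $R_v$ via coupling with the infinite $s$-regular tree, and the count of non--tree-like vertices via the Poisson cycle statistics in the configuration model), so you are not diverging from the intended source. Two small remarks: the mixing-time definition \eqref{TG} in this paper is a \emph{relative} $L^\infty$ error, not total variation, so your displayed TV bound is not quite the right inequality to invoke, though the spectral estimate $|Q_u^{(t)}(v)-\pi(v)|\le \lambda^t$ on a regular graph gives $n\lambda^t=o(1)$ for $t=C\log n$ and yields the same conclusion. And for the $R_v\le 3$ bound in P4 you correctly flag that the constant~$3$ is the only non-obvious part; deferring it to the explicit tree-plus-one-edge computation in \cite{CF1} is exactly what the paper does implicitly by citing that reference.
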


\section{Proof of Theorem \ref{Th1-S}}
\subsection{Proof of Theorem \ref{Th1-S}(a)}\label{Proof1a}
In this section $G$ is the union of a random $r$-regular graph with  red edges, and a random $b$-regular graph with blue edges, where $r\ge 1$ and $b \ge 3$. We prove that if the budget is $\g(\a)n\log n$, where $\g(\a)= \a\s_{RB} r/(r+b)$,
then w.h.p. the cover time of an  $\a$-constrained  smooth random walk on $G$
is asymptotically equal to $C_G=(\a\s_{RB}+(1-\a)\s_B)\,n\log n$.

It is shown in \cite{CF1} that  w.h.p. the unconstrained cover time of $G$ is $\wh C_G\sim \s_{RB} \,n\log n$. Thus without any restriction on the choice of the next edge,  a walk of length $\wh C_G$ would cross a red edge $\sim \frac{r}{r+b} \s_{RB} \,n \log n$ times. It follows that, for $\a<1$,  w.h.p. we will not be able to complete the walk within the budget without having to avoid red edges a significant number of times.

If the phases are less than a mixing time  in length ($o(T_G)$), so that the walk is not smooth, there are some technical difficulties that we have not yet managed to overcome. 
\begin{Remark}\label{rem1}
There does not seem to be a compelling reason for us to only consider smooth walks. If we allow the walk to switch arbitrarily between allowing red edges and not allowing them, then because $b \ge 3$ there are no problems with the walk mixing. This is because the blue subgraph is an expander and the steady state probabilities are $1/n$ regardless. The problem arises because the $R_v$  depend heavily on how we switch between the two  graphs.
\end{Remark}

\subsubsection{Upper bound}
Suppose now that we start a walk at vertex $u$ and that we alternate between red/blue phases and blue phases. Take $t_0=0$, and suppose  we change phase at times $t_1,t_2,\ldots,t_i,\ldots,t_m$ where $m=o(n^2)$. Assume that whenever $i$ is even we start a red/blue phase  at $t_i$  and switch to a blue phase when $i$ is odd. At each step in a red/blue phase there is an $r/(r+b)$ chance of using a red edge. We also know that this walk will finish at least as early as one that only uses blue edges. This follows from Lemma \ref{MainLemma} and the fact that $s/(s-1)$ decreases as $s$ increases.
 We will assume that $t_i-t_{i-1}\geq \om \log^2n$, where $\om \log^2n\gg T_G$, for $1\leq i<m$. It follows  that
\beq{rb0}{
t_{RB}=\sum_{i\ge 0}(t_{2i+1}-t_{2i})\leq \brac{1-\frac{1}{\log^{1/2}n}}\a\s_{RB} n\log n.
}
Applying the Chernoff bound \eqref{Chern1}, we see that if $Z_i$ denotes the number of times a red edge is used in $[t_{2i},t_{2i+1}]$, then where $\e=1/\log^{1/2}n$,
\[
\Pr(Z_R\geq \g(\a)n\log n)\leq \sum_{i=0}^m\Pr\brac{Z_i\geq \frac{(1+\e)(t_{2i+1}-t_{2i})r}{r+b}}\leq
\sum_{i=0}^m\exp\set{-\frac{\e^2r\om \log^2n}{r+b}}=o(1).
\]
Let $T_G(u)$ be the time taken to visit every vertex of $G$ by the random walk $\cW_u$. Let $U_t$ be the number of vertices of $G$ which have not been visited by $\cW_u$ at step $t$.
We note the following:
\begin{eqnarray}
C_u=\E T_G(u)&=& \sum_{t > 0} \Pr(T_G(u) \ge t), \label{ETG} \\
\label{TG-}
\Pr(T_G(u) > t)&=&\Pr(U_t>0)\le \min\{1,\E U_t\}.
\end{eqnarray}
It follows from (\ref{ETG}), (\ref{TG-}) that for all $t$
\begin{equation}
\label{shed}
C_u \le t+ \sum_{s \ge t} \E U_s=t+\sum_{v\in V}\sum_{s \ge t}\Pr(\ul A_s(v)).
\end{equation}
In both phases the random walk is on a regular graph and so $\p_v=1/n$ throughout.

We are stopping the RB walk at $t^*\sim\s_{RB} n \log n$ when the budget runs out. At that time the walk has made $t_{RB}\sim\a \s_{RB} n \log n$ RB steps, and $t_B\sim (1-\a)\s_{B} n \log n$ B steps, consisting of  $m$ phases each of B and RB up to $t^*$. This is followed by a final 'blue forever' phase of length $t'_B=t-t^*$.

So,
\beq{11}{
\Pr(\ul A_{t}(v))=\brac{1+O\bfrac{\log n}{n}}^m \exp\set{-\frac{\p_v}{\s_{RB}}
\sum_{i\geq 0}(t_{2i+1}-t_{2i})-\frac{\p_v}{\s_B} \sum_{i\geq 1}(t_{2i}-t_{2i-1})}.
}
Now assuming that $t=O(n\log n)$ and because each phase has length at least $\om\log^2n$ we have $m=o(n/\log n)$, and so we have
\begin{align}
\Pr(\ul A_{t}(v))&\lesssim \exp\set{-\brac{\a+\frac1{\s_B}\brac{\frac{t}{n\log n}-\a\s_{RB}}}\log n}\nn\\
&=\exp\set{-\frac{t}{\s_B n}+\a\s_{RB}\brac{\frac{1}{\s_B}-\frac{1}{\s_{RB}}}\log n}.\label{Asv}
\end{align}

For non locally tree-like vertices, using P4, we can write
\beq{Asw}{
\Pr(\ul A_{t}(v))\lesssim e^{-t/3n}.
}

Define a time $t_0=(1+\e)\t_C$, by
\[
t_0= (1+\e) \brac{\a\s_{RB}+(1-\a)\s_B n\log n}.
\]
We note this value of $t_0$, (and $t_1$ below), is unrelated to our previous use of notation $t_i$ for a smoothed period, but is consistent with \cite{CF1}.

Thus if $\e=\log^{-1/2}n$
going back to \eqref{shed} and using P4, \eqref{Asv}, we see that for some absolute constants $\g_1,\g_2>0$,
\begin{align}
C_u&\leq t_0+\sum_{v\in [n]}\sum_{\t\geq 0}\Pr(\ul A_{t_0+\t})\nn\\
&\leq t_0+ne^{-\g_1\log^{1/2}n}\sum_{\t\geq 0}e^{-\g_2\t\log n}+(r+b)^{3\log\log n}\sum_{\t\geq t_0}e^{-\t/3n}\nn\\
&=t_0+o(n).\label{upper1}
\end{align}
This confirms the upper bound in Theorem \ref{Th1-S}(a).

\subsubsection{Lower bound} \label{LowerBd}
For the lower bound we let $t_1=\tfrac{(1-\e)\a(r+b)}{(b-2)(r+b-1)}n\log n$ and argue that $C_u\geq t_1$ w.h.p. Proceeding as in \cite{CF1}, for any vertex $u$, we can find a set of vertices $S$ such that at time $t_1$, the probability the set $S$ is covered by the walk $\cW_u$ tends to zero. Hence $T_G(u) > t_1$ \whp\ which together with \eqref{upper1} completes the proof of Theorem \ref{Th1-S}(a).

We construct $S$ as follows. Let $\s$ be given by \eqref{sigs}. Let $S \seq [n]$ be some maximal set of locally tree-like vertices all of which are at least distance $2\s+1$ apart. Thus $|S| \ge (n-r^{3\s})/r^{(2\s+1)}$.

Let $S_1$ denote the subset of $S$ which has not been visited by $\cW_u$ after step $t_1$.
\begin{align}
\E |S_1|&\sim\sum_{v \in S} \exp\set{-\frac{t_1}{\s_Bn}+\brac{1+O\bfrac{\log n}{n^{1/2}}}\frac{\a(r+b)\log n}{r}\brac{\frac{1}{\s_{RB}}-\frac{1}{\s_B}}}\nn\\
&\geq (n-r^{3\s})r^{-(2\s+1)}\exp\set{-\frac{\e\a(r+b)\log n}{(b-2)(r+b-1)}}\to\infty.\label{Stoinf}
\end{align}
 Let $Y_{v,t}$ be the indicator for the event that $\cW_u$ has not visited vertex $v$ at time $t$. Let $Z=\{v,w\} \subset S$. We will show below that
\begin{equation}\label{poi}
\E (Y_{v,t_1}Y_{w,t_1})\sim \exp\set{-2\brac{\frac{t_1}{\s_Bn}+\brac{1+O\bfrac{\log n}{n^{1/2}}}\frac{\a(r+b)\log n}{r}\brac{\frac{1}{\s_{RB}}-\frac{1}{\s_B}}}}.
\end{equation}
Thus
\begin{equation}\label{zzz}
\E (Y_{v,t_1}Y_{w,t_1})=(1+o(1))\E (Y_{v,t_1}) \E( Y_{w,t_1}).
\end{equation}
It follows from (\ref{Stoinf}) and (\ref{zzz}), that
$$\Pr(S(t_1)\neq 0)\geq \frac{(\E |S(t_1)|)^2}{\E (|S(t_1)|^2)}
=\frac{1}{\frac{{\bf E} |S_{t_1}|(|S_{t_1}|-1)}{({\bf E}
|S(t_1)|)^2}+(\E|S_{t_1}|)^{-1}}
=1-o(1).$$
Going back to \eqref{11}, we see that with these values of $\p_Z,R_Z$, this completes the proof of Theorem \ref{Th1-S}(a), apart from \eqref{poi} which we give next.

\paragraph{\bf Proof of (\ref{poi}).}
Let $\G$ be obtained from $G$ by merging $v,w$ into a single node $Z$. This node has degree $2r$ and every other node has degree $r$.

There is a natural measure-preserving mapping from the set of walks in $G$ which start at $u$ and do not visit $v$ or $w$, to the corresponding set of walks in $\G$ which do not visit $Z$. Thus the probability that $\cW_u$ does not visit $v$ or $w$ in the first $t$ steps is equal to the probability that a random walk $\wh{\cW}_u$ in $\G$ which also starts at $u$ does not visit $Z$ in the first $t$ steps.

That $\p_Z=\frac{2}{n}$ is clear. We also have
\[
R_v\leq R_Z\leq R_v+\sum_{t=2\s+1}^T(\p_w+\l^t)=R_v+O(T\l^{2\s}).
\]
The sum here being a bound on the probability that $\cW_v$ is at $w$ before the end of the mixing time. The value $\l$ is the second eigenvalue in {\bf P5}.
\proofend

\subsection{Congestion model: Theorem \ref{Th1-S}(b)}
Let $t_1=(1+\e)(1+C/F)\s_{RB}n\log n$ where $\e>0$ is arbitrary. The amount of time spent in a free phase is $(1+\e)\s_{RB}n\log n$ and so w.h.p. $G$ will be covered.

For the lower bound, let $t_2=(1-\e)(1+C/F)\s_{RB}n\log n$. We defer the proof that $G$ will not be covered until Section \ref{conj1}.

\section{Proof of Theorem \ref{Th1-F}}
\paragraph{Stationary distribution and mixing time.}
Let $d=r+b$ be the total degree of a vertex in the red/blue coloured graph $G$. Assume $d \ge 3$
and $G$ satisfies the conditions of Section \ref{Preg}.

In the flip model, $\r_R$ (resp. $\r_B$) is the probability of transition over a given red edge (resp. blue edge).
This corresponds to a weighted graph in which $w(e_R)=\r_R$, $w(e_B)=\r_B$ for red and blue edges respectively,
all vertices $v$ have weight $w(v)=1$ and $w(G)=n$. Thus $\pi_{RB}(v)=1/n$, and $G$ has uniform stationary
distribution. For a given set $S \seq V$, the conductance (bottleneck ratio) $\F_{RB}(S)$ is
\[
\F_{RB}(S)= \frac{Q(S,S^c)}{\pi(S)}=\frac{\sum_{v \in S,  w \in S^c} \pi_vP_{RB}(v,w)}{\sum_{v \in S}\pi_v}
=\frac{\sum_{v \in S, w \in S^c} P_{RB}(v,w)}{|S|}.
\]
Comparing with a simple random walk on $G$,
\[
\frac 1{|S|}{\sum_{v \in S, w \in S^c} P_{RB}(v,w)}\ge \min(\r_R,\r_B) \frac{E(S:S^c)}{d|S|}=\min(\r_R,\r_B) \F_{SRW}(S),
\]
where $E(S:S^c)$ is the number of edges between $S$ and $S^c$ and $ \F_{SRW}$ refers to the conductance of a simple random walk.

As $G$ satisfies {\bf P5}, the conductance $\F^*_{SRW}$ of a simple random walk on $G$ is  constant for some $0<c<1$.
Assuming $\min(\r_R,\r_B)$ is also a positive constant, the conductance $\F^*_{PB}$ of the flip walk is
constant, i.e.,
\[
\F^*_{PB}=\min_{\pi(S)\le 1/2}\F_{RB}\F(S) \ge \min(\r_R,\r_B) \F^*_{SRW}.
\]
It follows  that the flip walk on $G$ is rapidly mixing and satisfies \eqref{TG} as required.

\paragraph{Return probability.}
The proof of Theorem \ref{Th1-F} will follow from the proof in \cite{CF1}, once we establish $R_v$ for locally-tree-like vertices. To do this we argue as follows. The initial formulation is for the general case $r \ge 1, b \ge 2$. We  then prove existence of solutions for $r=1, b \ge 2$, and state the exact value of $R_v$ for the special case $r=1,b=2$. The journey to the answer is longer than we might expect.

Consider an infinite $d$-regular tree $T_d$, where $r+b=d$ and in which each vertex has $r$ red and $b$ blue edges.
For discussion we regard $T_d$ as rooted at a designated vertex $v$,  with all edges  nominally oriented outward from $v$. With this orientation, a transition away from $v$ (resp. $u$) at a vertex $u$, is a transition $uw$ down an out-edge of $u$. Relative to this orientation, a vertex other than $v$ is red if its in-edge is red, and blue if its in-edge is blue. With the exception of $v$ this induces an alternating pattern of red and blue vertices in $T_d$, in which e.g., a red vertex has $(r-1)$ red out-edges, and $b$ blue out-edges.

Recall that, in the flip model, $\r_R$ (resp. $\r_B$) is the probability of transition over a given red edge (resp. blue edge). Let $f$ be the probability of a first return to $v$ in $T_d$, then the expected number of returns to $v$ is  $R_v(T_d)=1/(1-f)$. Let $\f_{v,R}$ (resp. $\f_{v,B}$) be the probability of a return to $v$ given the particle exits $v$ on a red (resp. blue) edge.
Thus
\[
f= r\r_R \,\f_{v,R}+b \r_B\, \f_{v,B}.
\]
Let $u$ be a red vertex, and $uw$ a red edge pointing away from $u$ and thus away from $v$. Define $\f_{RR}=\f_{RR}^{(u)}(uw)$ by
\begin{align*}
\f_{RR}=& \Pr( \text{ the walk returns to }u \mid u \text{ is red and the walk exits away from } u \text{ on a red edge}),
%\\\psi_{R}=& \Pr( \text{ the walk returns to }u \mid u \text{ is red and the walk exits away from } u),
\end{align*}
and define $\f_{RB}, \f_{BB}, \f_{BR}$ similarly. Note that $\f_{RR}=\f_{BR}=\f_{v,R}$. We denote this common value by $\f_R$.
For a red vertex $u$, let $\psi_R(u)$ be the probability the walk moves away from $v$ at $u$, and subsequently returns to $u$. Thus
with $P(u,w)$  the colour dependent transition probability over the edge $uw$,  $\f(uw)$ the colour dependent return probability, and $N^+_R(u)$ the red out-neighbours of $u$ etc.,
\[
\psi_R(u)=\sum_{w\in N^+(u)} P(u,w) \f(uw)=\sum_{w \in N^+_R(u)}\r_R\f_{RR}+\sum_{w \in N^+_B(u)}\r_B\f_{RB}.
\]
If $u$ is red, there are $r-1$ red out-neighbours, and $b$ blue out-neighbours, so
\[
\psi_R=(r-1)\r_R \f_R+ b \r_B \f_B.
\]
Suppose the transition is away from $u$ over a red edge $uw$ to a red out-neighbour $w$, then the walk moves back to $u$ with probability $\r_R$ and so
\begin{equation}\label{geom}
\f_{RR}^{(u)}(uw)=\r_R+\psi_R \r_R+\cdots+(\psi_R)^j \r_R+\cdots= \frac{\r_R}{1-\psi_R}.
\end{equation}
Here the term $(\psi_R)^j \r_R$ is the probability that the walk returns to $u$ after making $j$ returns to $w$ that do not include a visit to $u$.

For a walk arriving at $u$, three things can happen. It moves directly back towards $v$ (probability $\r_R >0$), moves away from $u$ and returns to $u$ later (probability $\psi_R$),
or moves away from $u$ and never returns (probability $\xi_R$). Thus
\begin{equation}\label{xi-etc}
\r_R+\psi_R+\xi_R=1,
\end{equation}
and $\psi_R \le 1-\r_R<1$, and similarly for $\psi_B<1$. This justifies  division by $1-\psi_R$ in \eqref{geom}.

We obtain
\begin{align}
\psi_R=& \frac{(r-1) \r_R^2}{1-\psi_R}+ \frac{b \r_B^2}{1-\psi_B}, \label{eq1}\\
\psi_B=& \frac{r \r_R^2}{1-\psi_R}+ \frac{(b-1) \r_B^2}{1-\psi_B},\label{eq2}\\
f=&\frac{r \r_R^2}{1-\psi_R}+ \frac{b \r_B^2}{1-\psi_B}.\label{eq3}
\end{align}
Do \eqref{eq1} and \eqref{eq2} always have  solutions  $\psi_R, \psi_B$ in $[0,1]$ and if more than one solution, which to choose?
This is discussed next. Assume for now that solutions exists. Then for a given solution $\psi_B$, using \eqref{eq2} and \eqref{eq3} we see that
\begin{equation}\label{f-psi-B}
f= \psi_B+ \frac{\r_B^2}{1-\psi_B}.
\end{equation}
As $1/R_v=1-f+o(1)$ in $G$, we obtain the required solution to the cover time of $G$ as
\begin{equation}\label{Cov-G}
C_G \sim \frac{1}{1-f} \;n \log n .
\end{equation}

\paragraph{General solution in the case $r=1$.}
As $r=1$ we have $\r_R+b\r_B=1$. Assume $b \ge 2$ and $\r_R \le \r_B$ so that $0 \le \r_R \le 1/(b+1)$.
Use \eqref{eq1} to get
\[
\frac{1}{1-\psi_R}=\frac{1-\psi_B}{1-\psi_B-b\r_B^2},
\]
and  substitute this in \eqref{eq2} giving
\[
\psi_B=\frac{\r_R^2 (1-\psi_B)}{1-\psi_B-b\r_B^2}+ \frac{(b-1)\r_B^2}{1-\psi_B}.
\]
Put $q=b\r_B$ and $\r_R=1-q$  where $b/(b+1) \le q \le 1$, and replace $\psi_B$ by $z$ in the above expression.
We look for solutions to  $F(z)=0$  where
\begin{equation}\label{F(z)}
F(z)= z-\frac{(1-q)^2 (1-z)}{1-z-q^2/b}- \frac{(b-1)q^2}{b^2(1-z)}.
\end{equation}
If $\r_R=1/(b+1)$ then all edges are equivalent in a $b+1$ regular tree and the roots $z^*$ are  $1/(b+1), b/(b+1)$.
If $q=1$ (no useable red edges) then the problem reverts to a $b$-regular tree, in which case the roots
$z^*$ of $F(z)$ are $1/b, (b-1)/b$.

 If $b=2,q=1$, $z=1/2$ is the only root to $F(z)=0$. Ignoring this case which corresponds to a random walk on the line, we will assume that $q<1$ and $b\geq 2$. Let $z_0=1-q^2/b$. In the interval $I_0=[0,z_0]$, $F(0)<0$, $F(z_0^-)=-\infty$. It can be checked that $F(z)$ is concave on $I_0$. Indeed, $z$ and $-\frac{(b-1)q^2}{b^2(1-z)}$ are concave on $[0,1)$ and we can write $-\frac{1-z}{1-z-q^2/b}=-1-\frac{q^2/b}{1-z-q^2/b}$.

In the interval $I_1=[z_0,1]$, $F(z_0^+)=\infty$, $F(1)=-\infty$ and $F(z)$ is monotone decreasing. So there are between one and three roots  in $[0,1]$ depending on what happens in $I_0$. We will establish that if $b\ge 2,\, q<1$, then $F(1/2)>0$ which implies there are two roots in $I_0$. We have
\[
F(1/2)=\frac 12 - \frac{b(1-q)^2}{b-2q^2}- \frac{2(b-1)q^2}{b^2}.
\]
It can be checked that $\partial F/\partial b>0$ for any fixed $q$, so as $F(1/2)$ is monotone increasing with $b$,
and $b=2$ has the largest range of $q$, (i.e., $2/3 \le q<1$), we check this case. After some simplification we obtain that if $b=2$,
\[
 F(1/2)= (1-q) \brac{\frac{1+q}{2}-\frac{1}{1+q}}.
\]
So $ F(1/2,b=2) >0$ boils down to $(1+q)^2 >2$. As $q\ge 2/3= b/(b+1)$, and $(5/3)^2>2$, it is indeed the case
that $F(1/2)>0$ as required.

\paragraph{The simplest case: Theorem \ref{Th1-F}(a).}
We solve $F(z)=0$, where $F(z)$ is given by \eqref{F(z)}, in the simplest case; namely $r=1$, and $b=2$. Thus $\r_R+2\r_B=1$. Put $q=1-\r_R=2\r_B$,
and $w=1-z$ to give, after some rearrangement,  $F(w)=N(w)/D(w)$. Here the numerator $N(w)$ is
\[
N(w)=w^3-\frac q2 (4-q)w^2+\frac 34 q^2w-\frac 18 q^4.
\]
It can be checked that $w=q/2$ is a factor of $N(w)$ and so
\[
N(w)=(w-q/2)(w^2 +w (q/2) (q-3)+ q^3/4).
\]
The roots of $F(w)=0$ are
$ q/2, \;\; (q/4) [(3-q) \pm \sqrt{9-10q+q^2}]$. We  prove  that the smallest root $z \in [0,1]$ is indeed the correct one.
We use $\r_B+ \psi_B+\xi_B=1$  from \eqref{xi-etc}. As $\r_B=q/2$ and $\psi_B=z=1-w$ we obtain $\xi_B=w-q/2$.

The root $w^-=(q/4) [(3-q) - \sqrt{9-10q+q^2}]$ implies $\xi_B <0$ for $q<1$. As $\xi_B$ is a probability, this is impossible.
The root $w=q/2$  (or $w^-(q=1)$) imply that $\xi_B=0$. This corresponds to a recurrent system, an answer we do not accept (see below).
Thus the only feasible root is $w^+=(q/4) [(3-q) + \sqrt{9-10q+q^2}]$ giving $\psi_B=z=1-w^+$, this being the smallest root of $F(z)=0$.

We briefly sketch why the system is not recurrent. For a random walk on the half-line $\{0,1,2,...\}$ starting from vertex 1, let $q,p$ be the probability of moving left and right respectively at any vertex $i >0$. Then assuming $q \le p$, the probability of absorption at the origin 0, is $\pi_0=q/p$. Thus for our system
\[
\pi_0 \le \max \set{ \frac{\r_R}{1-\r_R}, \frac{\r_B}{1-\r_B}}= \max \set{ \frac{\r_R}{2\r_B}, \frac{\r_B}{\r_B+\r_R}}.
\]
As we assume $0< \r_R \le \r_B$ it follows that $\pi_0<1$, $\xi>0$ and the system is not recurrent.

Refer to \eqref{f-psi-B} and \eqref{Cov-G}. With $\psi_B=1-w$ and $\r_B=q/2$.
 the value of $\th =1/(1-f)$  is
\[
\th= \frac 12 \brac{ \frac 1{w+q/2} + \frac 1{w-q/2}}.
\]
The largest value of $w$ among the roots, and hence the smallest value of $z=\psi_B$,
gives
\begin{equation}\label{theta-q}
\th= \frac 2{q(5-q+\sqrt{9-10q+q^2})}+ \frac 2{q(1-q+\sqrt{9-10q+q^2})}.
\end{equation}
The minimum of $\th(q)$ is at $q=2/3$, which follows by checking the derivative.
This corresponds to $\r_R=\r_B=1/3$.  The value
$q=2/3$   gives $\th=2$,  and hence
value of $C_G \sim 2 n \log n$ in \eqref{Cov-G}.
This confirms  the w.h.p. cover time of a random 3-regular graph from \cite{CF1} as the minimum cover time for this model.
The value of $\th$ is
plotted in Figure \ref{Fig1}.

\vspace{-0.1in}

\begin{figure}[H]
	\centering
	\includegraphics[scale=.65]{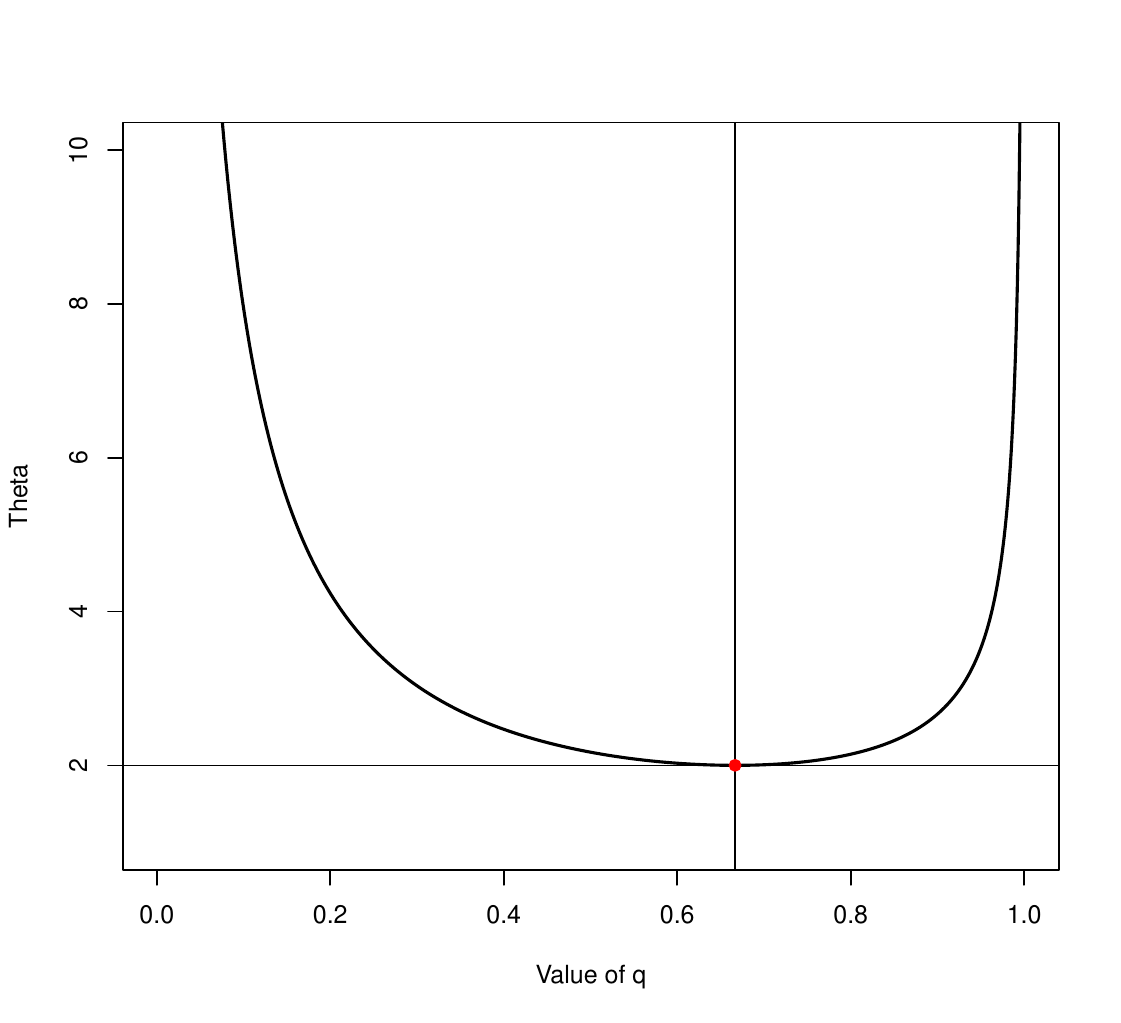}
	\caption{
Plot of  $\theta=\theta(q)$ from \eqref{theta-q} as $q$ varies from 0 to 1. The minimum value of $\theta=2$ is at $q=2/3$, giving a cover time of $2 n \log n$ for random 3-regular graphs. Values of $q$
for which $\r_R < \r_B$ are to the right of the vertical line at $q=2/3$. The plots diverge  at $q=0,1$.
 } \label{Fig1}
 \end{figure}

\section{The oblivious model: Proof of Theorem \ref{Th1-O}}
\subsection{Proof of Theorem \ref{Th1-O}(a).}
This follows directly from Theorem \ref{Th1-S}(a).
\subsection{Proof of Theorem \ref{Th1-O}(b).}\label{HAM}
We prove that if $b=2$, and the blue edges span a Hamilton cycle  $H$, then
 the cover time of an oblivious walk on $G$ with budget $t_1=(1-\e)(r+1)/r n \log n$  is asymptotic to $n^2/2$.

Divide $H$ into $\ell=n^{\nu}$ consecutive intervals $I_1,I_2,\ldots,I_\ell$ of length $L=n^{1-\nu}$, where  $0<\nu \le \e/3$. Fix one such interval $I$.  Let $S_0$ be the locally tree-like vertices in $I$ and then choose $S$ by greedily choosing a subset of $S_0$ where each pair of vertices are at distance $2\s+1$ apart. Note that $|S|\geq n^{1-\n-o(1)}$.

Let $S_1=S_1(t_1)$ denote the subset of $S$ which has not been visited by the walk  $\cW$ at step $t_1=(1-\e)(d-1)/(d-2) n \log n$, where $d=r+b$ as usual, and $b=2$.
Let $X_v$ be the indicator that vertex $v \in S$ is unvisited at step $t_1$.
Thus applying Lemma \ref{MainLemma} as in Section \ref{Proof1a}, we obtain
\begin{align*}
\E |S_1|= &\sum_{v \in S} X_v=
\sum_{v \in S}\exp\set{-(1+\eta)\frac{R_v}{n}\;(1-\e)\frac{d-1}{d-2} n \log n}\\
= &\frac{n^{1-\nu}}{d^{3\s}}\; n^{-1+\e+o(1)}= n^{\d},
\end{align*}
where $\d=\e-\nu+o(1)$. Here $\eta=o(1)$ combines the errors arising from estimating $R_v=(d-2)/(d-1) (1+o(1))$, the
convergence via \eqref{TG} of  $\cW$ to $\pi_v=1/n$, and the approximation of $p_v=\pi_v/R_v (1+O(T\pi_v)$
in Lemma \ref{MainLemma}.

We next prove that $|S_1(t_1)|>0$ by adapting the approach used in Section \ref{LowerBd}.
When calculating the variance of $|S_1|$ by contraction of pairs $u,v \in S_1$,
 the main error $\nu$ comes from the change in $R_v$  due to the probability that
 the  walk travels between the pair of contracted vertices $u,v$ in the mixing time.
  From \cite{CF1} this is
\begin{equation}\label{beta}
\b= \sum_{t=2\s}^{T_G}(\pi_v+\l^t)=o(1/\log n)
\end{equation}
assuming $\l=1-\th$ for some $\th>0$ constant, and $\s$ is given by \eqref{sigs}.

 For $X=\sum X_v$,
\begin{align*}
\E X(X-1)=&\sum_{u,v \in S} \E X_uX_v\\
=&\sum_{u,v \in S}\exp\set{-(1+\eta)\frac{2 }{n}R_{(u,v)}(1+\b)
\;(1-\e)\frac{d-1}{d-2} n \log n}\\
=&n^{2\d}n^{O(\b/n)}=(\E |S_1|)^2\brac{1+ O\bfrac{1}{n}},
\end{align*}
where $\b$ is defined in \eqref{beta}.  So,
\begin{align*}
\var |S_1| = &\E X(X-1)+ \E |S_1|-(\E |S_1|)^2\\
= &\E|S_1|\brac{1+ O\bfrac{ \E |S_1|}{n}}\\
=&\E|S_1|(1+o(1)).
\end{align*}
Using the Chebychev Inequality we deduce that
\[
\Pr(S_1=\es)<\Pr(|S_1-\E S_1| \ge \E S_1 /2) \le \frac{5}{\E S_1}=O\bfrac{1}{n^\d}.
\]
Let $\nu = \e/3$. The expected number of segments with $S_1=\es$ is at most
\[
 O\bfrac{\ell}{n^\d}=  O\bfrac{n^\nu}{n^{\e-\nu +o(1)}}=\frac{1}{n^{\nu+o(1)}}.
\]
We conclude that w.h.p. each segment $I$ has at least one unvisited  vertex in $S_1(I)$ when the budget runs out at $t_1$. Thus we have to effectively walk  around  the cycle to reach them all. This requires $\sim n^2/2$ time w.h.p. Let $C_{\sc OBV}( \e)$ be the cover time of an oblivious walk with budget at most $t_1=(1-\e)C(G)$, then
\[
C_{\sc OBV}(\e)\approx C(C_n) \sim \frac{n^2}{2}.
\]

\subsection{Proof of Theorem \ref{Th1-O}(c).}
We first observe that the space ${\cal F}(2,r)$ of graphs formed by the union of a random 2-factor and a random $r$-regular graph is contiguous to  the space ${\cal G}(r+2)$ of random $(r+2)$-regular graphs, see for example Wormald \cite{Wor}. So, the red/blue walk has mixing time $O(\log n)$ w.h.p. (we apply contiguity to the property of having a small second eigenvalue as in P5).

We next prove a basic lemma on random 2-factors.
\begin{lemma}\label{2fac}
W.h.p. a random 2-factor contains at least 2 cycles of length at least $cn /(\log^2 n)$ for some absolute constant $c>0$.
\end{lemma}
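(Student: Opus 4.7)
The plan is to realise the uniform random 2-factor via the configuration model on $2n$ half-edges (two per vertex) and peel off cycles one at a time; the configuration model is contiguous to the uniform simple 2-factor since the probability of simplicity is bounded away from $0$, so any $o(1)$ estimate transfers.

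The first step is to analyse the length $L_1$ of the cycle containing vertex $1$. Starting with one of its half-edges, repeatedly follow the free half-edge of the current endpoint. After $k+1$ distinct vertices have been visited the current free half-edge is paired uniformly with one of the $2n-2k-1$ remaining free half-edges, exactly one of which closes the cycle by landing on the dangling half-edge of vertex $1$. A direct product computation gives
\[
\Pr(L_1\ge\ell)=\prod_{k=0}^{\ell-2}\brac{1-\frac{1}{2n-2k-1}}\sim\sqrt{\tfrac{n-\ell}{n}},
\]
uniformly for $n-\ell\rai$. Taking $\ell=K:=cn/\log^2 n$ gives $\Pr(L_1\ge K)=1-O(1/\log^2 n)$.

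Next, condition on $L_1$: the remaining $n':=n-L_1$ vertices carry a uniform perfect matching of their $2n'$ free half-edges, which is again a random 2-factor on $n'$ vertices. Reapplying the estimate, $\Pr(L_2\ge K\mid L_1)\sim\sqrt{1-K/n'}$ on $\{n'\ge K\}$, so $\Pr(L_2<K\mid L_1)\le K/n'+o(1)$ there. To remove the conditioning I would split on whether $n'\ge n/\log n$: the previous display gives $\Pr(n'<n/\log n)=\Pr(L_1>n-n/\log n)=O(1/\sqrt{\log n})$, while on the complementary event $K/n'\le c/\log n$. Hence $\Pr(L_2<K)=O(1/\sqrt{\log n})$, and a union bound with $\Pr(L_1<K)=o(1)$ yields two cycles of length at least $K$ \whp.

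The main obstacle is controlling the second cycle, because $L_1$ is typically of order $n$ and so the slack $n-L_1$ can be atypically small; the $O(1/\sqrt{\log n})$ tail of $L_1$ is only just small enough to push the bound for $L_2$ through at scale $n/\log^2 n$. A secondary technicality is the passage from the configuration model to the uniform simple 2-factor, handled by conditioning on simplicity (a positive-probability event) so that constants may change but the $o(1)$ estimates survive.
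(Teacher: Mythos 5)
Your proof is correct and takes a genuinely different route from the paper's. The paper argues indirectly in two steps: first it shows the number of cycles in the 2-factor is $O(\log n)$ w.h.p.\ by encoding the 2-factor as a random permutation weighted by $2^{-\ell(\pi)}$ and invoking Karp's bound on the number of cycles of a random permutation; then it shows by a first-moment computation in the configuration model that no cycle has length above $n-n/\log n$ w.h.p.; combining, the longest cycle has length $\Omega(n/\log n)$, the remaining $\geq n/\log n$ vertices are spread over $O(\log n)$ further cycles, and so some second cycle has length $\Omega(n/\log^2 n)$. You instead peel cycles directly in the configuration model, using the exact survival probability $\Pr(L_1\ge\ell)=\prod_{k=0}^{\ell-2}\bigl(1-\tfrac{1}{2n-2k-1}\bigr)\sim\sqrt{(n-\ell)/n}$ together with the observation that, conditional on the first cycle, the residual $n'=n-L_1$ vertices carry a fresh uniform pairing; your split on $\{n'\geq n/\log n\}$ is precisely what is needed to tame the heavy upper tail of $L_1$ at the scale $n/\log^2 n$, and the transfer from the configuration model to the uniform simple 2-factor by conditioning on simplicity (a $\Theta(1)$ event) is the standard contiguity argument, applied correctly. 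Your route is more elementary and self-contained (no appeal to Karp's patching paper) and gives an explicit failure probability of order $1/\sqrt{\log n}$; the paper's route yields as a by-product the stronger structural fact that there are only $O(\log n)$ cycles in total, which is not needed for the lemma but is informative elsewhere.
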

\begin{proof}
We begin by arguing that the number of cycles in a random 2-factor is $O(\log n)$ w.h.p. It follows that w.h.p. there is at least one cycle of length $\Omega(n/\log n)$, among which we choose the longest. We then argue that w.h.p. the maximum  length of this cycle is at most $\ell=n-n/\log n$.  As at least $n/\log n$ vertices remain after removing this cycle it follows that there is another cycle of length at least $\Omega(n/\log^2n)$.

\paragraph{Number of cycles.} Let $S_n$ denote the set of permutations of $[n]$, and let $S' \seq S_n$ be the subset with minimum cycle length 3. We can obtain a random 2-factor $F$ by choosing a random permutation from $S'$. This is not done uniformly, but we choose permutation $\p$ with probability proportional to  $\tfrac{1}{2^\ell}$ where $\ell=\ell(\p)$ is the number of cycles in $\p$. Having chosen $\p$ we obtain our 2-factor by ignoring orientation. A 2-factor  $F$ with $\ell=\ell(F)$ cycles arises from $2^\ell$ distinct permutations and so a 2-factor $F$ is chosen with probability $2^{\ell(F)}\cdot\frac{1}{2^{\ell(F)}}\cdot\frac{1}{M}$, where $M=\sum_{\p \in S'} \frac{1}{2^{\ell(\p)}}$.  Now the proof of Proposition 2(a) of Karp \cite{Kpatch} implies that
\beq{noc}{
\Pr(\ell(\p)\geq a\log n)\leq \Pr(Bin(a\log n,1/2)\leq \log_2n)\leq n^{-a/3}\qquad\text{for }a\geq 2.
}
So, $\Pr(\ell(\p)\geq 2\log n)\leq n^{-2/3}$. Because $|S'|/|S_n|=c\sim e^{-3/2}$ this also holds for $S'$. Summing over $S'$,
\[
M = \sum_{\ell(\p)< 2 \log_2 n} \frac{1}{2^{\ell(\p)}}+\sum_{\ell(\p)\ge  2 \log_2 n} \frac{1}{2^{\ell(\p)}} \geq \frac{c n!(1-O(n^{-2/3}))}{n^2}.
\]
And\[
\Pr(\ell(F)\geq 3\log_2n)=\sum_{m\geq 3\log_2n}
\frac{|\{\pi\in S':\ell(\pi)=m\}|}{2^{m}M}=O(n^{-1}).
\]
\paragraph{Longest cycle.}
We use the configuration model of Bollob\'as \cite{Boll}. In this model the expected number of cycles of length at least $\ell\geq \ell_0=n-n/\log n$ is
\begin{align*}
\sum_{k=\ell_0}^n\binom{n}{k}\frac{(k-1)!}{2}\cdot 2^k\cdot \frac{(2n-2k)!}{(n-k)!2^{n-k}}\cdot\frac{n!2^n}{(2n)!}&=
\sum_{k=\ell_0}^n \frac{2^{2k}}{2k} \frac{n! n!}{(2n)!} {2n-2k \choose n-k}\\
 =& O\bfrac{1}{\sqrt n}+O(\sqrt n)\sum_{k=\ell_0}^{n-1} \frac{1}{\sqrt{n-k}}  \frac 1k\\
 = & O\bfrac{1}{\sqrt n}+O (\sqrt{\log n})\; \log \bfrac{n}{n-n/\log n}\\
&=O\bfrac{1}{\sqrt{ \log n}}.
\end{align*}
\end{proof}
Given Lemma \ref{2fac} we proceed as follows: let $C_1,\,C_2$ be two of the cycles of length at least $\Omega(n/\log^2 n)$, as promised by Lemma \ref{2fac}. As  in Section \ref{HAM}, let $L=n^{1-\nu}$ where $\nu>0$. Partition each of the cycles $C_1,\,C_2$ into intervals of length $L$.  We know that w.h.p. at time {$t_1=(1-\e)(d-1)/(d-2) n \log n$, (where $d=r+b$) there will be unvisited vertices in $C_1,\,C_2$ and the result follows.

\subsection{Finishing the proof of Theorem \ref{Th1-S}(b). The lower bound}\label{conj1}
For the lower bound, let $t_2=(1-\e)(1+C/F)\s_{RB}n\log n$.
The length $C,F$  of the  charged and free periods are fixed  and in constant proportion.
The $t_2$ steps are divided into $\t=n^{\th}$ phases of equal length $C+F= n^{1-\th+o(1)}$.
The total amount of time spent in free phases is $(1-\e)\s_{RB}n\log n$.

%So w.h.p. $n^{\eta}$ vertices of $G$ will not be covered during these phases. Here $\eta>0$ is a constant.

In phase $1\leq i\leq \t$, during a charged period $C$ the walk will w.h.p. cover an interval $J_i$ of the Hamilton cycle of length $n^{(1-\th)/2+o(1)}$.
Let $I_1,I_2,\ldots,I_\ell$ be  intervals of length $L=n^{1-\nu}$, where $\nu=\e/3$  as in Section \ref{HAM}. In total, the intervals $J_1,J_2,\ldots,J_{\t}$ cover at most  $n^{\th} \,n^{(1+\th)/2 +o(1)}$ vertices.

We require $n^{\th+(1+\th)/2 +o(1)} \ll L= n^{1-\e/3}$, which is satisfied by $\th =(1-\e)/4$.
With this value of $\th$, w.h.p. each interval $I_j$ $j=1,...,\ell$ will contain a vertex unvisited during the free phases. This completes the proof of Theorem \ref{Th1-S}(b).

\end{document}